\newtheorem{The}{Theorem}[section]
\newtheorem{Cor}[The]{Corollary}
\newtheorem{Lem}[The]{Lemma}
\theoremstyle{definition}
\newtheorem{Def}[The]{Definition}
\newtheorem{Conj}[The]{Conjecture}
\newtheorem{Exa}[The]{Example}
\theoremstyle{remark}
\newcommand{\rmnum}[1]{\romannumeral #1}
\newcommand{\Rmnum}[1]{\expandafter\@slowromancap\romannumeral #1@}
\title{Oda's conjecture for reflexive polytopes: some special cases}
\author{Binnan Tu}
\date{}
\begin{document}

\subjclass[2020]{Primary 52B20; Secondary 52B11, 52B12.}
\keywords{Oda's conjecture, IDP pair, simplicial reflexive polytopes, facet unimodular polytopes, almost co-unimodular pair}

\begin{abstract}
In this paper, we show that Oda's question holds for $n$-dimensional simplicial reflexive polytope $P$ and lattice polytope $Q$ containing the origin, when the vertex of $Q$ is either a vertex of $P$ or the origin, provided that $P$ has no more than $n+1$ lattice points on each facet and possesses unimodular triangulation. Then we prove Oda's question is true for any two facet unimodular polytopes whose matrix defining the facets has at most two non-zero entries in each row, and also true for any almost co-unimodular pair of reflexive polytopes. % In the end, we provide a proof that Ewald's conjecture covers Oda's conjecture.
\end{abstract}

\maketitle

\section{Introduction}

    The Minkowski sum of two subsets $P$ and $Q$ of $\mathbb{R}^n$ is defined as:
    $$
    P+Q= \{v+u \in \mathbb{R}^n \vert v\in P, u \in Q\}.
    $$

Let $P:=\mathrm{conv}(x_1,\dots,x_s)$, $Q:=\mathrm{conv}(y_1,\dots,y_t)$, then the definition above is equivalent to 
$$
P+Q=\mathrm{conv}(\{x_i+y_j \in \mathbb{R}^n \vert i \in \{1,\dots,s\}, j \in \{1,\dots, t\}\}).
$$

%%The notion 'Ewald polytope' arises from the conjecture raised by G\:{u}nter Ewald in 1988. It claims that up to unimodular %%transformation, any smooth Fano polytope is contained in a hypercube ${[-1,-1]}^n$. As a possible intrinsic property of smooth %%Fano polytopes, we would like to figure out its relation with other geometric properties and open questions. In this topic, we %%would like to show that the so-called smooth Ewald polytope gives a positive answer to the original Oda's question in 1997.
%%\begin{Def}
%%An $n$-dimensional lattice polytope $P\subset \mathbb{Z}^n$ is called an \emph{Ewald polytope} if it satisfies the (weak) %%Ewald condition, that is, up to unimodular equivalence, $P\subset {[-1,1]}^{n}.$ It's called \emph{smooth} if it simplicial %%(each facet possesses exactly $n$ vertices) and all vertices of a facet form a lattice basis.
%%\end{Def}

In 1997, Tadao Oda \cite{Oda} asked a question about the Minkowski sum of two lattice polytopes, namely, when will the following equation holds for lattice polytopes $P$ and $Q$:
$$
(P+Q)\cap \mathbb{Z}^n = P\cap \mathbb{Z}^n + Q\cap \mathbb{Z}^n.
$$
The polytope pair $(P,Q)$ satisfying the equality above is said to be an \emph{integer decomposition property pair (IDP pair)}. It is trivially correct in dimension $1$. But in general, there are already many counterexamples in dimension $2$, e.g., $P=\mathrm{conv}((0,0),(0,1),(1,0))$ and $Q=\mathrm{conv}((0,0),(2,1),(3,1))$. The lattice point $(1,1)$ can not be decomposed into two lattice points in $P$ and $Q$. Besides, if we require that $(P,Q)$ is an IDP pair when $Q=(k-1)P$ (for any integer $k \geq 1$), the equality can be transferred to
$$
kP\cap \mathbb{Z}^n = P\cap \mathbb{Z}^n + \cdots + P\cap \mathbb{Z}^n \ (k \ \textnormal{times})
$$
which is called the \emph{integer decomposition property (IDP) of $P$ }. This is not generally true in dimension $\geq 3$, e.g., $P=\mathrm{conv}((0,0,0),(1,1,0),(1,0,1),(0,1,1))$. The point $(1,1,1) \in 2P$ can not be decomposed into two lattice points in $P$. As a well-known fact, the IDPness of a lattice polytope is corresponding to the projective normality of the related projective toric varieties (see e.g., \cite{CLS}). In Oda's paper \cite{Oda}, he raised two famous conjectures for answering the equalities above, which have captured much attention in commutative algebra, combinatorics and toric geometry. Recall that an $n$-dimensional polytope is said to be \emph{smooth} if there are exactly $n$ edges at each vertex and the primitive edge vectors form a lattice basis.

\begin{Conj} [T. Oda \cite{Oda}] 
The following two statements are true:
\begin{itemize}
    \item [1.] Any smooth polytope has the IDP.
    \item[2.]  Let $P,Q \subset \mathbb{R}^n$ be two smooth polytopes, if the normal fan of $P$ refines the normal fan of $Q$, then they form an IDP pair.
\end{itemize}
\end{Conj}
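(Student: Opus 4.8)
The plan is first to recognize that the statement above is precisely Oda's conjecture in its original form, which remains \emph{open} in full generality; the present paper, as its title and abstract make clear, establishes only special cases. Consequently any honest proof proposal must describe the standard line of attack together with the point at which it stalls, rather than a complete argument. The natural framework is toric geometry: to a smooth polytope $P$ one associates the smooth projective toric variety $X_P$ together with the very ample line bundle $L_P$ whose lattice sections are indexed by $P \cap \mathbb{Z}^n$. Under this dictionary (the one invoked via \cite{CLS} in the introduction), statement~1 (IDPness of $P$) is equivalent to the projective normality of the embedding $X_P \hookrightarrow \mathbb{P}(H^0(X_P,L_P))$, and statement~2 is the relative analogue: the normal-fan refinement yields a toric morphism $X_P \to X_Q$, and the IDP-pair condition expresses the surjectivity of the multiplication map $H^0(L_P) \otimes H^0(L'_Q) \to H^0(L_P \otimes L'_Q)$, where $L'_Q$ is the pullback of $L_Q$ and $L_P \otimes L'_Q$ corresponds to the Minkowski sum $P+Q$.

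For statement~1 the route I would pursue is via \emph{regular unimodular triangulations}. If $P$ admits a unimodular triangulation $\mathcal{T}$, then every lattice point of $kP$ lies in a dilated cell $k\sigma$ with $\sigma \in \mathcal{T}$, and a unimodular simplex is trivially IDP, so $P$ is IDP; this reduces statement~1 to the purely combinatorial assertion that \emph{every smooth polytope possesses a unimodular triangulation.} First I would build such a triangulation inductively near the boundary, using smoothness (the primitive edge directions at each vertex form a lattice basis) to guarantee that the local cones at vertices are unimodular, and then attempt to extend the facet triangulations coherently into the interior by a regular (pulling) subdivision, hoping the induced full-dimensional cells remain unimodular.

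The hard part, and the reason the conjecture is open, is exactly this extension step: smoothness controls the geometry \emph{only in a neighborhood of each vertex}, and there is no known mechanism forcing an interior cell to split unimodularly in a way consistent across all facets. Equivalently, it is not known whether smoothness implies IDP, let alone whether it implies existence of a unimodular triangulation; even the weaker statement that smooth polytopes are \emph{normal} is unresolved. For statement~2 the same obstruction reappears relatively: one would need a triangulation of $P$ that simultaneously refines the pullback of a triangulation of $Q$ and is itself unimodular, and controlling both conditions at once is strictly harder than either alone. I therefore expect no uniform proof, and would instead, as this paper does, impose additional hypotheses, namely simpliciality, a bounded number of lattice points per facet, facet-unimodularity, or almost co-unimodularity, under which the triangulation-and-gluing obstruction can be bypassed on a case-by-case basis.
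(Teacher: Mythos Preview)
Your assessment is correct: the statement is Oda's \emph{conjecture}, presented in the paper as Conjecture~1.1, and the paper does not (and does not claim to) prove it. There is therefore no ``paper's own proof'' to compare against; the paper proves only the special cases recorded in Theorems~\ref{thm:3} and~\ref{thm:4}. Your explanation of why the conjecture is open---smoothness controls only the local picture at vertices, and no known mechanism propagates this to a global unimodular triangulation or to IDPness---is accurate and matches the general understanding reflected in the paper's introduction and its citation of Haase--Paffenholz for the limited dimensions in which normality has been verified.

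One small correction of emphasis: you write that the paper imposes ``simpliciality, a bounded number of lattice points per facet, facet-unimodularity, or almost co-unimodularity'' to bypass the obstruction. This is right in spirit, but note that Theorem~\ref{thm:3} is about the \emph{face fan} (Fano-type) setting rather than the normal-fan setting of Oda's second statement, and Theorem~\ref{thm:4} drops the smoothness and normal-fan-refinement hypotheses entirely in favor of matrix-theoretic conditions on facet normals. So the paper's special cases are not literally sub-cases of Conjecture~1.1 but rather parallel results in the same circle of ideas; Corollary~\ref{Cor:3} is the one place where a genuine instance of the IDP-pair philosophy (for smooth Fano polytopes under face-fan refinement) is recovered.
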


One can easily see that the second conjecture implies the first one. For the second conjecture, only a few results are known. For example, Robins \cite{SR} shows that $(P,Q)$ is an IDP pair if they are smooth and their coarse common refinement of normal fans has at most $n+3$ rays in $\mathbb{R}^n$, which extends the result of Ikeda \cite{Ike} on common refinements with at most $n+2$ rays. In addition, the IDP pair has some nice connections to the IDPness of their Cayley sums, see e.g., \cite{HOT} and \cite{Tsu}. Regarding the first conjecture, Beck et al. \cite{BHH} have shown that centrally symmetric smooth polytopes of dimension $3$ always have the IDP. Besides, it's well known that the existence of a unimodular triangualtion on a lattice polytope implies its IDPness. Though, proving IDPness and finding unimodular triangulations directly on general smooth polytopes are quite hard, people have already seen some nice results for the reflexive case. A lattice polytope is said to be \emph{reflexive} if its dual polytope is again a lattice polytope (see Subsection \ref{sub:1}).

\begin{Def}
    Let $P$ be a lattice polytope in dimension $n$ and $v$ be an arbitrary vertex of $P$. The subgroup $L$ of $\mathbb{Z}^n$ is defined as 
    $$L:= v + \sum_{x,y \in P \cap \mathbb{Z}^n} \mathbb{Z}(x-y)$$
    $P$ is said to be \emph{normal} if for any integer $k\geq 1$, the equality below holds:
    $$
    kP \cap L = P\cap L + \cdots + P\cap L \ (k \ \textnormal{times}).
    $$
\end{Def}

By the definition, one can see that $P$ is IDP if $P$ is normal and $L=\mathbb{Z}^n$. Therefore, for smooth polytopes, the IDPness is exactly the same as the normality. And these two notions are blurred in the rest of this paper. 

\begin{The}[Haase-Paffenholz, \cite{HP}] \
    \begin{itemize}
        \item All smooth reflexive polytopes in dimension at most $8$ are normal.
        \item All smooth reflexive polytopes in dimension at most $6$ have a (regualr) unimodular triangulation.
    \end{itemize}
\end{The}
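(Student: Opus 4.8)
The plan is to reduce both assertions to finite computations by invoking the complete classification of smooth reflexive polytopes. Smooth reflexive polytopes of dimension $n$ are, up to unimodular equivalence, in bijection with $n$-dimensional smooth toric Fano varieties, and these have been completely enumerated in each fixed dimension up to $9$ by \O{}bro's algorithm (and assembled into Paffenholz's database). Thus there are only finitely many smooth reflexive polytopes in each dimension $n\le 8$ (for instance $749892$ of them when $n=8$), and it is enough to verify the stated property for each representative on the list. Since the excerpt already records that for smooth polytopes one has $L=\mathbb{Z}^n$, so that normality and the IDP coincide, the first bullet becomes a finite collection of IDP checks and the second a finite collection of triangulation constructions.

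For the normality statement in dimension $n\le 8$, I would first cut the infinitely many dilation conditions down to a bounded range. By the Ewald--Wessels theorem, every dilate $cP$ with $c\ge n-1$ is already normal, and a short descent argument then shows that $P$ is IDP as soon as
$$(P\cap\mathbb{Z}^n)+\big((k-1)P\cap\mathbb{Z}^n\big)=kP\cap\mathbb{Z}^n$$
holds for the finitely many values $2\le k\le n-1$. For each polytope on the list, which is given by explicit vertex and facet data, each such identity is decided by enumerating the lattice points of $kP$ and testing whether each of them splits off a lattice point of $P$ landing back in $(k-1)P$; this is a terminating, purely arithmetic computation.

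For the unimodular triangulation statement in dimension $n\le 6$, I would construct a regular unimodular triangulation for every polytope on the list explicitly: enumerate the lattice points of $P$, choose a generic height function, form the induced regular (lower-hull) subdivision, and check that every full-dimensional cell is a simplex of normalized volume one. When a particular height vector fails to produce a unimodular triangulation I would fall back on a pulling or placing triangulation from a suitable ordering of the lattice points, or perturb the heights; in these low dimensions such a regular unimodular triangulation can reliably be produced and exhibited, which certifies the claim polytope by polytope.

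The main obstacle is not conceptual but one of scale and of the asymmetry between the two bullets. The dimension-$8$ list is enormous, so the decomposition test must be implemented efficiently --- smart enumeration of $kP\cap\mathbb{Z}^n$, early termination, and exploitation of the symmetries of each polytope --- to be feasible over the whole database. The reason the triangulation claim stops at dimension $6$ while normality reaches $8$ is precisely that producing a \emph{guaranteed regular unimodular} triangulation is strictly harder than certifying the IDP: for $n=7,8$ one can still confirm normality directly through the decomposition identity above even in cases where no unimodular triangulation is known or where the search for one becomes prohibitively expensive. Thus the genuine difficulty lies in the combinatorial explosion of the classification and in the triangulation search, rather than in any single step of the argument.
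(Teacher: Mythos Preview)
The paper does not prove this theorem at all: it is quoted as a known result of Haase and Paffenholz \cite{HP} and used purely as background, so there is no ``paper's own proof'' to compare against. Your proposal is a faithful sketch of how such a result is actually established --- classification via \O{}bro's algorithm followed by computer verification of IDP and of regular unimodular triangulations on the finite list --- and is essentially the method behind the cited reference. One minor point: your reduction for normality via Ewald--Wessels is correct in spirit, but you should be careful that the bound you need is that $(n-1)P$ is IDP for every lattice polytope $P$, which gives that checking $kP\cap\mathbb{Z}^n = P\cap\mathbb{Z}^n + (k-1)P\cap\mathbb{Z}^n$ for $2\le k\le n-1$ suffices; this is indeed a finite check per polytope.
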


Since every smooth (reflexive) polytope we know is found to be normal in higher dimension, Oda's conjecture seems to be true. Furthermore, when dealing with Oda's conjectures, one may indeed focus on the reflexive case. The following statements provide a very powerful fact, namely, every smooth polytope can be realized as a face of some smooth reflexive polytope.

\begin{The} [Wedge construction, Haase-Melnikov, \cite{HM}]
    Let $P$ be an $n$-dimensional polytope, then up to lattice equivalence, there exists a reflexive polytope $P'$ in higher dimension such that $P$ is a face of $P'$.
\end{The}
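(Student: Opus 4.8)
The plan is to realize $P$ as a face of a lattice polytope that already contains the origin in its interior, and then to repair the failure of reflexivity one facet at a time by a dimension-raising wedge. Recall that a lattice polytope $R$ with $0\in\mathrm{int}(R)$ is reflexive exactly when every facet lies on a hyperplane $\langle a,x\rangle=1$ with $a$ a primitive lattice vector; in general each facet lies on $\langle a_i,x\rangle=c_i$ with $a_i$ primitive and $c_i\ge 1$ an integer. I would therefore measure the failure of reflexivity by the total defect $D(R):=\sum_i(c_i-1)\ge 0$, which vanishes precisely when $R$ is reflexive, and run an induction on $D$.

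First I would fix a starting polytope. Embedding $P\subset\mathbb{R}^n\hookrightarrow\mathbb{R}^{n+1}$ at heights $\pm 1$, set $Q_0:=\mathrm{conv}\big((P\times\{1\})\cup(-P\times\{-1\})\big)$. Its height-$0$ slice is $\tfrac12(P-P)$, a full-dimensional symmetric body, so $0\in\mathrm{int}(Q_0)$; its height-$1$ slice is exactly $P\times\{1\}$, which is thus a facet lattice-equivalent to $P$ lying on the reflexive hyperplane $\langle e_{n+1},x\rangle=1$. Hence, up to lattice equivalence, $P$ is a face of $Q_0$, the origin is interior, $Q_0$ is a lattice polytope, and $D(Q_0)$ is a finite nonnegative integer. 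Since this distinguished facet already has distance $1$, it is never touched and $P$ will persist as a face throughout.

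The engine is the wedge step. Given the current polytope $Q\subset\mathbb{R}^N$ (with $0$ interior, $P$ a face, facets $\langle a_i,x\rangle\le c_i$) and a non-reflexive facet $F_1$ with $c_1\ge 2$, I would form the wedge $W:=\{(x,y):x\in Q,\ 0\le y\le c_1-\langle a_1,x\rangle\}\subset\mathbb{R}^{N+1}$ and then translate by $-e_{N+1}$, so that the interior lattice point $(0,1)$ becomes the new origin. The facets of $W$ are the floor $Q\times\{0\}$, the roof $\langle a_1,x\rangle+y=c_1$, and one wall over each remaining facet $F_i$. After the translation the floor sits on $y=-1$ as a reflexive facet lattice-equivalent to $Q$ (so it still carries $P$ as a face), the walls keep their normals $(a_i,0)$ and distances $c_i$, while the roof acquires the primitive normal $(a_1,1)$ at distance $c_1-1$. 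Thus $W$ is again a lattice polytope with $0$ interior and $P$ a face, and $D(W)=D(Q)-1$.

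Iterating this step $D(Q_0)$ times then produces a lattice polytope $R$ of dimension $(n+1)+D(Q_0)$ with $0$ interior, with $P$ still a face, and with $D(R)=0$; that is, $R$ is reflexive and contains $P$ as a face, which is the assertion. The step I expect to require the most care is the facet analysis of the wedge: one must verify that the floor, roof, and walls are \emph{exactly} the facets of $W$, that each wall is genuinely full-dimensional (which uses $F_i\neq F_1$), and that the translation leaves every vertex integral while lowering only the single targeted distance and leaving the others unperturbed. Granting this bookkeeping, termination is immediate, since $D$ is a nonnegative integer that strictly decreases at each step.
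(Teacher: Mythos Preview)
The paper does not supply its own proof of this theorem---it is quoted from Haase--Melnikov, and the only content the paper adds is the Kim--Santos description of a single wedge step, $W_F(P)=P\times[-1,\infty)\cap\{(x,t):f(x)+t\le b-1\}$. Your argument is precisely this construction iterated: after your translation by $-e_{N+1}$ your $W$ coincides verbatim with the Kim--Santos formula, and you have merely added a concrete starting polytope $Q_0=\mathrm{conv}\big((P\times\{1\})\cup(-P\times\{-1\})\big)$ and the defect $D=\sum_i(c_i-1)$ as an explicit induction counter, so the approach is the same and the proof is correct.
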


It's true that the smoothness will be kept but it's not so obvious and clear in this construction. In Santos and Kim's paper \cite{KS}, they provide an equivalent but more intuitve structure, that is,
$$W_F(P):=P \times [-1,\infty) \cap \{(x,t) \in {\mathbb{R}}^d \times \mathbb{R} : f(x)+t \le b -1 \}$$  
Details and the picture below can be found in Section $3.1$ of their paper.

\begin{figure}[ht]
    \centering
    \includegraphics[width=0.75\linewidth]{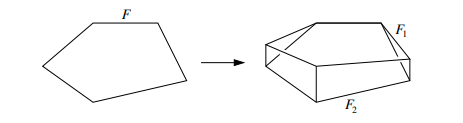}
    \caption{Wedge Structure, E. Kim and F. Santos}
    \label{fig:placeholder}
\end{figure}

With the wedge construction above, one can easily see the equivlance between the IDPness of smooth polytope and the IDPness of smooth reflexive polytopes. We give the precise description of it in the next section.

Thus, to approach Oda's conjectures, it's enough to mainly study reflexive polytopes, see Corollary \ref{Cor:2}. In this paper, we focus on several special types of reflexive polytopes. A trivial fact is that every simplicial reflexive polytope whose vertices on each facet form a lattice basis (also known as \emph{smooth Fano polytope}) has the IDP since it always possesses a unimodular triangulation. Another interesting result is that the faces of Minkowski sums can be decomposed into Minkowski sum of faces of polytopes (see \cite[Proposition 12.1]{Fuk}). But they do not necessarily form an IDP pair. If we put some restrictions on the faces, then the next statement, which is one of our main results, generalizes the trivial fact and indicates an IDP pair where unimodular triangulation of only one polytope is required. Denote $\mathcal{V}(P)$ as the set of all vertices of $P$.

\begin{The} \label{thm:3}
  Let $Q\subset \mathbb{R}^n$ be a lattice polytope containing the origin and $P \subset \mathbb{R}^n$ be a simplicial reflexive polytope of dimension $n$, any of whose facets $F$ satisfies $\vert F \cap \mathbb{Z}^n \vert \leq n+1$. If $P$ possesses a unimodular triangulation and $\mathcal{V}(Q) \subset \mathcal{V(P)} \cup \{0\}$, then $(P,Q)$ is an IDP pair.
\end{The}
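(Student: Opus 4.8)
The plan is to reduce the set equality to a one-sided rounding statement and then exploit the reflexivity of $P$ together with its unimodular triangulation. Since the inclusion $P\cap\mathbb{Z}^n+Q\cap\mathbb{Z}^n\subseteq(P+Q)\cap\mathbb{Z}^n$ is automatic, it suffices to show that every $w\in(P+Q)\cap\mathbb{Z}^n$ admits $w=p+q$ with $p\in P\cap\mathbb{Z}^n$ and $q\in Q\cap\mathbb{Z}^n$; and for this it is enough to find a single lattice point $q\in Q\cap\mathbb{Z}^n$ with $w-q\in P$, because then $w-q$ is automatically a lattice point of $P$. So the whole problem becomes: peel off one lattice point of $Q$ from $w$ so as to land back inside $P$.

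First I would use reflexivity to control the ``height'' of $w$. Writing $P=\{x:\langle a_F,x\rangle\le 1\ \text{for every facet }F\}$ with each $a_F$ a primitive lattice normal, the hypothesis $\mathcal{V}(Q)\subseteq\mathcal{V}(P)\cup\{0\}$ gives $\langle a_F,v\rangle\le 1$ for every vertex $v$ of $Q$, hence $\max_{Q}\langle a_F,\cdot\rangle\le 1$ and therefore $\langle a_F,w\rangle\le 2$ for all $F$. Locating $w$ in the cone $\mathbb{R}_{\ge 0}F_0$ over a facet $F_0$, one checks that $F_0$ realizes this maximum, so $\langle a_{F_0},w\rangle=\max_F\langle a_F,w\rangle=:d\in\{0,1,2\}$. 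If $d\le 1$ then $\langle a_F,w\rangle\le 1$ for every $F$, i.e. $w\in P$, and $q=0\in Q$ finishes the case. The substance is concentrated in $d=2$.

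For $d=2$ I would localize $w$ through the triangulation. Because $P$ is reflexive, its unimodular triangulation may be taken to be the pulling triangulation from the origin: every maximal cell is $\mathrm{conv}(0,\sigma)$ with $\sigma$ a unimodular $(n-1)$-simplex lying in a facet, since coning a height-$1$ unimodular simplex from $0$ preserves unimodularity. Taking the cell whose cone contains $w$, its boundary simplex $\sigma=\mathrm{conv}(z_1,\dots,z_n)\subseteq F_0$ has vertices forming a lattice basis, so $w=\sum_i\lambda_i z_i$ with $\lambda_i\in\mathbb{Z}_{\ge 0}$ and $\sum_i\lambda_i=\langle a_{F_0},w\rangle=2$; thus $w=z_a+z_b$ with $z_a,z_b$ lattice points of $F_0$. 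For any lattice point $z\in F_0$ one has $w-z\in P\iff w-z\in F_0$ (both sides lie in $\{\langle a_{F_0},\cdot\rangle=1\}$, which meets $P$ in $F_0$), and $w-z_a=z_b\in F_0$, $w-z_b=z_a\in F_0$ hold for free. Hence everything reduces to proving that $z_a$ or $z_b$ lies in $Q$. Here I would use that $d=2$ forces the face $Q_{F_0}:=Q\cap\{\langle a_{F_0},\cdot\rangle=1\}$ to be nonempty, with vertices exactly the vertices of $P$ on $F_0$ that belong to $Q$, and that any representation $w=p_0+q_0$ puts $p_0\in F_0$ and $q_0\in Q_{F_0}$.

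When $F_0$ is unimodular (exactly $n$ lattice points) its vertices are the $z_i$ and form a lattice basis, so the coordinates of $w$, of $p_0$, and of $q_0$ all match the integral coefficients $\lambda_i$; a vertex $v$ of $F_0$ with positive coefficient in $q_0$ then has $\lambda_v\ge 1$ (a positive integer) and lies in $Q$, closing this case. The main obstacle is the remaining configuration, a facet carrying its single extra lattice point $u$ with $|F_0\cap\mathbb{Z}^n|=n+1$: now the vertices of $F_0$ are no longer a lattice basis, the pulling cell $\sigma$ uses $u$, and the integral $\{z_i\}$-coordinates of $w$ need not be aligned with the face $Q_{F_0}$ that contains $q_0$, so one is not guaranteed that $z_a$ or $z_b$ lands in $Q$. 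I expect the crux of the proof to be exactly this alignment step, where the bound $|F\cap\mathbb{Z}^n|\le n+1$ must be used to pin down the triangulation of $F_0$ as the star of the unique extra point $u$ and to choose the cone containing $w$ so that one of its boundary vertices is forced into $Q$; controlling this single extra point is where the hypotheses genuinely enter.
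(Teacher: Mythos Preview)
Your setup matches the paper's approach almost exactly: both construct the centric unimodular triangulation $\mathrm{conv}(0,\sigma)$ from the boundary simplices, both write $w=z_a+z_b$ with $z_a,z_b\in F_0\cap\mathbb{Z}^n$, and both reduce to showing that one of $z_a,z_b$ lies in $Q$. Your treatment of the cases $d\le 1$ and of unimodular facets is fine.

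The genuine gap is precisely where you say it is: you do not prove the case $|F_0\cap\mathbb{Z}^n|=n+1$, you only describe what you hope will happen. Your proposed mechanism (``choose the cone containing $w$ so that one of its boundary vertices is forced into $Q$'') is not how the paper proceeds, and it is not clear it can be made to work: the cone is determined by $w$, not freely chosen, and the extra point $u$ need not lie in $Q$.

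What the paper actually does is argue by contradiction. Assuming $z_a,z_b\notin Q$, it first pins down the rational decomposition $w=p_0+q_0$ more tightly: it picks a face $G$ of $Q$ with $q_0\in\mathrm{cone}(G)$, shows $q_0\in F_0\cap G$, and proves the key claim that $F_0\cap G$ is a common \emph{face} of the simplex $F_0$ and of $G$ (this uses $\mathcal V(Q)\subseteq\mathcal V(P)\cup\{0\}$ and simpliciality). Then it splits into two sub-cases. If one of $z_a,z_b$ equals the extra point $u$, the other must be a vertex $v$ of $F_0$ outside $F_0\cap G$; taking the hyperplane through the facet of $F_0$ opposite $v$ and comparing heights of $p_0+q_0$ and $z_a+z_b$ forces $u$ onto $F_0\cap G\subseteq Q$, a contradiction. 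If instead both $z_a,z_b$ are vertices of $F_0$, then the edge $\ell(z_az_b)$ is a face of $F_0$ disjoint from $F_0\cap G$; since $q_0\in F_0\cap G$ lies off this edge, convexity of $F_0$ forces $p_0=z_a+z_b-q_0$ outside $F_0$, again a contradiction. This face-of-a-simplex geometry, together with the linear functional and convexity arguments, is the missing ingredient in your write-up.
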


After that, we prove IDP pairs for facet unimodular polytopes with at most two non-zero entries for each facet normals and almost co-unimodular pair of polytopes. A polytope is called \emph{facet unimodular} if its facet normals are row vectors of a totally unimodular matrix. A pair of polytopes $(P,Q)$ is said to be \emph{almost co-unimodular}, if their facet normals are row vectors of almost co-unimodular matrix, namely, the rows of more than two non-zero entries form a totally unimodular matrix. There are already some similar results for polytopes without requirements for their face fans or normal fans. In Danilov and Koshevoy's paper \cite{DK}, by discussing the unimodular systems, they show that any two $n$-dimensional lattice polytopes whose tanget space $\mathbb{R}(F-F):=\{c(x-y) \vert x,y \in F, c \in \mathbb{R}\}$ of each face $F$ is generated by column vectors of a full row rank unimodular $m\times n$-matrix, will form an IDP pair. Besides, Howard \cite{How} shows that any two edge unimodular polytopes, which are lattice polytopes whose edge direction vectors at each vertex are column vectors of a totally unimodular matrix, always form an IDP pair. As a corollary, the so-called pairwise facet unimodular polytopes also form IDP pairs. However, the examples mentioned by Howard shows that facet unimodular polytopes and edge unimodular polytopes are not the same objects \cite{How}. Be aware that the above results and the following are true even if $P,Q$ are not in the same dimension. The next theorem is another main result of this paper.

\begin{The}\label{thm:4}
  Let $P,Q \subset \mathbb{R}^n$ be two lattice polytopes.
    \begin{itemize}
        
        \item [1.] If $P,Q$ are facet unimodular polytopes with at most two non-zero entries in each facet normal, then $(P,Q)$ is an IDP pair.
        \item [2.] If $P,Q$ are reflexive polytopes and the pair $(P,Q)$ is almost co-unimodular, then $(P,Q)$ is an IDP pair.
    \end{itemize} 
\end{The}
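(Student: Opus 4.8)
The plan is to reduce both statements to a single lattice-point existence problem and then exploit the two-nonzero and total-unimodularity structure through a half-integral rounding argument. Writing $P=\{x:Ax\le b\}$ and $Q=\{x:Cx\le d\}$ with $A,C$ the integer facet-normal matrices and $b,d$ integral, observe that $(P,Q)$ is an IDP pair if and only if, for every $z\in(P+Q)\cap\mathbb Z^n$, the nonempty rational polytope
$$
R_z:=P\cap(z-Q)=\{x\in\mathbb R^n: Ax\le b,\ -Cx\le d-Cz\}
$$
contains a lattice point; indeed $z=p+q$ with $p\in P\cap\mathbb Z^n$ and $q\in Q\cap\mathbb Z^n$ is exactly the statement $p\in R_z\cap\mathbb Z^n$, and $z\in(P+Q)\cap\mathbb Z^n$ is exactly $R_z\neq\emptyset$. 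Since $z$ is a lattice point and the facet normals are integral, the right-hand side $\beta:=(b,\,d-Cz)$ is an integer vector, so $R_z=\{x:Mx\le\beta\}$ with $M:=\binom{A}{-C}$ and $\beta$ integral. Thus everything reduces to showing that whenever $\{x:Mx\le\beta\}$ is nonempty it contains a lattice point.

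For statement (1), every row of $M$ has at most two nonzero entries, each equal to $\pm1$, so by the classical half-integrality of two-variable-per-inequality systems every vertex of $R_z$ lies in $\tfrac12\mathbb Z^n$. It is essential to note that $M$ need \emph{not} be totally unimodular even though $A$ and $C$ are: stacking two individually balanced $\{0,\pm1\}$ systems can create an unbalanced cycle, and a genuinely half-integral vertex can occur (this is already visible in small explicit examples). The heart of the proof is therefore a rounding lemma: from a half-integral point $x^\ast\in R_z$ one can always reach a lattice point of $R_z$. I would move along a half-integral direction $\tfrac12\chi$ with $\chi\in\{0,\pm1\}^n$ supported on the set $H$ of half-integral coordinates of $x^\ast$. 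Because $b$ and $d-Cz$ are integral, no single-variable bound facet can be tight at a half-integral coordinate, so every facet tight at $x^\ast$ and meeting $H$ is a two-variable constraint joining two coordinates of $H$, and these tight facets form a signed graph on $H$. Choosing $\chi$ so that $\langle m,\chi\rangle\le0$ for every tight row $m$ yields a feasible move to a point with strictly fewer half-integral coordinates, and iterating lands in $R_z\cap\mathbb Z^n$.

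The crux — and the step I expect to be the main obstacle — is proving that such a $\chi$ always exists. The conditions $\langle m,\chi\rangle\le0$ over the tight rows form a system of $2$-SAT clauses on the signs $\chi_i$, and such a system can be infeasible for an arbitrary $\{0,\pm1\}$ two-variable matrix (for instance when three coordinates are pairwise forced to be unequal). Here the hypothesis that $A$ and $C$ are \emph{each} totally unimodular is decisive: total unimodularity forbids an unbalanced cycle inside $A$ and inside $C$ separately, which rules out the bad clause configurations arising entirely from one polytope. I would then argue component by component on the signed graph of tight facets, combining the balancing column-signs furnished by the total unimodularity of $A$ and of $C$ into a consistent $\chi$, the point being that any obstruction would have to splice tight facets of $A$ and of $C$ into a configuration incompatible with $x^\ast$ being an actual feasible point of $R_z$. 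Making this splicing analysis airtight is the technical core.

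For statement (2) the same reduction applies, but reflexivity lets us normalize $P=\{x:Ax\le\mathbf 1\}$ and $Q=\{x:Cx\le\mathbf 1\}$, so that $M=\binom{A}{-C}$ with $\beta=(\mathbf 1,\ \mathbf 1-Cz)$. I would split $M$ into the block $N$ of rows having at least three nonzero entries and the block $S$ of rows having at most two; the almost co-unimodular hypothesis says $N$ is totally unimodular. The plan is to treat the two blocks asymmetrically: $N$ alone already cuts out an integral polytope, and one rounds the two-variable constraints of $S$ exactly as in part (1) while preserving $N$-feasibility, the total unimodularity of $N$ guaranteeing that the rounding corrections can be absorbed integrally. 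Reflexivity enters by pinning the right-hand sides of the $P$-facets to $\mathbf 1$ and placing the origin in the interior, which constrains the geometry enough to exclude the infeasible rounding configurations that are a priori possible. As in part (1), verifying that the $S$-rounding never conflicts with the integral $N$-block — now without the luxury of $S$ itself being balanced — is where the main work, and the main risk, lies.
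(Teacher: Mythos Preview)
Your reduction to the polytope $R_z=P\cap(z-Q)$ is correct, but from that point on the proposal is a plan rather than a proof: the existence of the rounding direction $\chi$ is explicitly left as ``the crux'' and ``the main obstacle'', and the splicing argument (``any obstruction would have to splice tight facets of $A$ and of $C$ into a configuration incompatible with $x^\ast$ being feasible'') is never carried out. This is a genuine gap. The obstruction you worry about---an unbalanced odd cycle among the tight two-variable rows---can perfectly well mix rows of $A$ and rows of $-C$, so separate total unimodularity of $A$ and $C$ does not obviously kill it, and you give no mechanism for why feasibility of $x^\ast$ would. The same issue recurs in part (2), where ``verifying that the $S$-rounding never conflicts with the integral $N$-block'' is again flagged as the main risk and left open.

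The paper avoids this entire difficulty by never forming the combined system $\binom{A}{-C}$. It keeps a rational decomposition $x=y+z$ with $y\in P$, $z\in Q$ and rounds each piece coordinatewise in opposite directions: $x=\lfloor y\rfloor+\lceil z\rceil$. Total unimodularity of $M_P$ with at most two nonzeros per row lets one sign the columns so that every two-variable row of $M_P$ is of network type $e_i-e_j$; a constraint $y_i-y_j\ge -a$ with integral $a$ is then preserved by taking all floors (or all ceilings), so $\lfloor y\rfloor\in P$. The analogous statement is invoked for $M_Q$ to get $\lceil z\rceil\in Q$. For part (2) the paper again uses componentwise floor/ceiling, now exploiting reflexivity to bound the coordinates of $y$ and $z$ and using Schrijver's characterisation of total unimodularity (column sums can be signed to lie in $\{0,\pm1\}$) to handle rows with three or more nonzeros. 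The point to take away is that separating $y$ and $z$ and rounding them independently is what makes the problem tractable; by passing to $R_z$ you have coupled the two systems and created exactly the unbalanced-cycle obstruction that you then cannot resolve.
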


About the structure of this paper, in Section $2$, we recall some basic facts about lattice geometry and fix notations.
In Section $3$, we give proofs to our main theorems, i.e., Theorems \ref{thm:3} and \ref{thm:4}.

%Another interesting and long-standing open question for smooth reflexive polytope is Ewald's conjecture, which is raised by Ewald in 1988 \cite{Ewa}. 
%\begin{Conj}
%    Let $P$ be an $n$-dimensional simple smooth (or Delzant) reflexive polytope, then $P \cap (-P)$ contain a unimodular basis of %$\mathbb{Z}^n$.
%\end{Conj}

%This question is essential and quite attractive since they provide a deep understanding of the classification problems on Fano %toric varieties. Currently, smooth reflexive polytopes have been completely classified until dimension $9$ \cite{LP}, \cite{Obro}. %Meanwhile, Mikkel \O bro \cite{Obro} mentioned a method to compute Ewald's conjecture in fixed dimensions and gave a positive %answer to the case up to dimension $7$. However, it's hard to proceed the process in higher dimension since the number increases %rapidly. In this paper, we will see the relation that Ewald's conjecture implies Oda's conjecture.

%\begin{The}
%    Let $P$ be an $n$-dimensional smooth reflexive polytope such that $P \cap -P$ contains a unimodular basis of $\mathbb{Z}^n$, %then $P$ has IDP.
%\end{The}

\section{Preliminaries}
In this section, we recall some basic facts and fix notations. For details about polyhedral geometry, we recommand the book by Cox, Little and Schenck \cite{CLS}.

\subsection{dual polytopes, unimodular equivalence} \label{sub:1}
The polar set of a polytope $K$ of $\mathbb{R}^n$ is defined to be the set 
$$
K^\vee := \{u\in \mathbb{R}^n \vert \langle u,v \rangle \geq -1, \forall v\in K \}.
$$
The polar set is indeed a polytope if the origin is contained in the interior of the polytope (see e.g., \cite{Ful}), which is called the \emph{dual polytope} of $K$. A lattice polytope is \emph{reflexive} if its dual polytope is again a lattice polytope. Furthermore, if $Q$ is the dual polytope of $P$, then $P$ is exactly the dual polytope of $Q$, i.e., $P=Q^\vee = (P^\vee)^\vee$ (see \cite{CLS}).

\begin{Def}
    A linear transformation $L: \mathbb{R}^n \rightarrow \mathbb{R}^n$ is called \emph{unimodular} if it is represented by a matrix $A$ with integral entries and $\mathrm{det}A = \pm 1$. A polytope $P$ is said to be unimodular equivalent (or lattice equivalent) to $Q$ if there exists a unimodular tranformation which maps the vertices of $P$ to the vertices of $Q$.
\end{Def}
It's well known that taking dual polytopes, unimodular triangulation, IDP, IDP pairs, etc, are invariant under unimodular transformations.

\subsection{fans, subdivisions, triangulations, unimodular transformation}
\begin{Def} \label{Def:2.2}

The \emph{dual cone} of a strongly convex polyhedral cone $\sigma$ is defined to be $\sigma^\vee:= \{u\in \mathbb{R}^n \vert \langle u,v \rangle \geq 0, \forall v\in \sigma\}$. A \emph{face} $\tau$ of a strongly convex polyhedral cone $\sigma$ is the intersection of $\sigma$ with some supporting hyperplane, namely, $\tau:= \sigma \cap u^\perp= \{v \in \sigma \vert \langle v,u\rangle =0 \}$ for some $u$ in $\sigma^\vee$. A \emph{facet} of $\sigma$ is a face of codimension $1$.

A \emph{fan} $\Delta$ is a set of strongly convex polyhedral cones $\sigma$ in $\mathbb{R}^n$ such that
\begin{itemize}
    \item each face of a cone $\sigma$ in $\Delta$ is again a cone in $\Delta$, and
    \item the intersection of two cones in $\Delta$ is a face of each.
\end{itemize}
The cone which is not a proper face of any cone in $\Delta$ is called the \emph{maximal cone} in $\Delta$. 

\end{Def}

\begin{Def}
Let $P$ be any lattice polytope containing the origin in its interior. For any face $f$ of $P$, the cone $\sigma_f$ over $f$ is a cone whose rays going along the vertices of $f$ with the origin as its apex. 
\begin{itemize}
  \item The \emph{face fan} $\Delta_P$ is the fan consisting of cones over any face of $P$. 
\end{itemize}
Let $Q$ be any lattice polytope. For any vertex $v$ of $Q$, the maximal cone $\tau_v$ with respect to $v$ is defined to be a cone whose rays going along the edge direction vectors at $v$ with point $v$ as its apex. Considering the translated maximal cone $\tau_v-v$, its dual cone is denoted as $\gamma_v:={(\tau_v-v)}^\vee$. 
\begin{itemize}
 \item The \emph{normal fan} $N(Q)$ of $Q$ is the fan consisting of all dual cones $\gamma_v$ and their faces.
\end{itemize}

\end{Def}
From these definitions, we can check that the face fan $\Delta_P$ of the polytope $P$ is the normal fan $N(P^*)$ of its dual polytope $P^*$. The main theorem can be stated after recalling the notion of unimodular triangulations.

\begin{Def}
Let $P$ be a polytope of dimension $n$. A \emph{subdivision} of $P$ is a finite collection $\mathcal{S}=\{P_1,\dots,P_m\}$ of polytopes, s.t.,
\begin{itemize}
    \item the face of each $P_i$ is again in $\mathcal{S}$
    \item $P$ is the union of $P_i$
    \item for $i\neq j$, $P_i \cap P_j$ is a common face (might be empty face) of $P_i$ and $P_j$.
\end{itemize}
The maximal ($n$-dimensional) polytopes in $\mathcal{S}$ are called cells. A subdivision is said to be \emph{triangulation} if each cell is a simplex. The triangulation is called \emph{unimodular} if each simplex is unimodular, namely, unimodularly equivalent to the standard $n$-simplex $\mathrm{conv}(0,e_1,\dots,e_n)$, where $e_i$'s are standard basis vectors of $\mathbb{R}^n$. The triangulation is called \emph{centric} if each maximal cell contains the origin as its vertex.
\end{Def}

\subsection{IDPness of smooth reflexive polytopes}

By the wedge construction in Section $1$, we have the following corollary.

\begin{Cor} \
\begin{itemize}
    \item Any smooth polytope has the IDP if and only if any smooth reflexive polytope has the IDP.
    \item For any smooth polytope pair $(P,Q)$, where the normal fan of $P$ refines the one of $Q$, they form an IDP pair if and only if the reflexive ones form an IDP pair.
\end{itemize}    
\end{Cor}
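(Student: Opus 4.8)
The plan is to prove each biconditional by observing that one direction is immediate and the other follows from the wedge construction of Haase--Melnikov together with the fact that the integer decomposition property---both for a single polytope and for a pair---is inherited by faces. Throughout I work with the explicit form $W_F(P)$ of Kim--Santos recalled in the introduction.

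For the first bullet, the forward direction is trivial: a smooth reflexive polytope is in particular smooth, so if every smooth polytope is IDP then so is every smooth reflexive one. For the converse, suppose every smooth reflexive polytope has the IDP and let $P$ be an arbitrary smooth polytope. By the wedge construction there is a smooth reflexive polytope $P'$ having $P$ as a face, say $F = P' \cap H$ for a supporting hyperplane $H$, and $P'$ has the IDP by hypothesis. The first real step is then a short \emph{face-inheritance lemma}: if $z \in kF \cap \mathbb{Z}^n \subset kP' \cap \mathbb{Z}^n$ and we write $z = z_1 + \cdots + z_k$ with $z_i \in P' \cap \mathbb{Z}^n$ using the IDP of $P'$, then, because $H$ is a supporting hyperplane of $P'$, each summand $z_i$ must lie on $H$ and hence in $F$. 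This yields the IDP of $P \cong F$.

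For the second bullet the forward direction is again immediate. For the converse the plan is to run the wedge construction on the pair $(P,Q)$ \emph{compatibly}: one selects a common facet direction and a single linear functional with respect to which both wedges are formed, producing smooth reflexive polytopes $P', Q'$ in which $P$ and $Q$ appear as corresponding faces cut out by the same supporting hyperplane $H$. The point that must be verified here is that this simultaneous construction preserves the refinement relation, so that the normal fan of $P'$ still refines that of $Q'$; then the reflexive pair $(P',Q')$ falls under the hypothesis and is an IDP pair. Granting this, the closing step is the pair-analogue of the face lemma: by the decomposition of faces of a Minkowski sum into Minkowski sums of faces (\cite[Proposition 12.1]{Fuk}), the face of $P' + Q'$ in the chosen direction equals $P + Q$; a lattice point of $k(P+Q)$ then decomposes via the IDP pair property of $(P',Q')$ into lattice points of $P'$ and $Q'$, and the supporting-hyperplane argument pins these summands onto $P$ and $Q$, giving a decomposition in $P \cap \mathbb{Z}^n + Q \cap \mathbb{Z}^n$.

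The main obstacle I expect is precisely the compatibility bookkeeping in the second bullet: ensuring that a single wedge direction can be chosen so that $P'$ and $Q'$ are simultaneously smooth and reflexive while the refinement of $N(P')$ over $N(Q')$ survives the added dimension, and that the faces carved from $P'$ and $Q'$ by the common hyperplane are exactly $P$ and $Q$ with matching normal data. The single-polytope face inheritance is routine; it is the coordination of the two wedges and the tracking of the refinement relation that require care.
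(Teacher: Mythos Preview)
Your proposal is correct and follows the same strategy as the paper: invoke the wedge construction to realize a smooth polytope as a face of a smooth reflexive one, then use that IDP (equivalently normality, for smooth polytopes) passes to faces. The paper's own proof is a single sentence to this effect and does not unpack the pair case at all, so your explicit face-inheritance argument and your flagging of the compatibility bookkeeping for the two simultaneous wedges are already more detailed than what the paper supplies.
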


\begin{proof}

    Since IDP is the same as the normality for smooth polytopes, it's obviously true by combining the constructions above and the fact that any face of a normal polytope is again normal.
    
\end{proof}

\subsection{polytopes with (almost) unimodular matrix, symmetric edge polytopes}

The hyperplane associated with a facet $F$ of lattice polytope $P$ is given by $\{x\in  \mathbb{R}^n \vert \langle n_F,x \rangle = k\}$, where $n_F$ is called the facet normal of $F$ and $k$ is an integer. The matrix whose each row represents a primitive facet normal of $P$ is denoted as $M_P$. Let $M_P$ be an $m\times n$-matrix with $n<m$. Then the matrix $M_P$ is said to be unimodular if any of its $n\times n$-submatrices has determinant $0,\pm 1$. We call a lattice polytope $P$ \emph{facet unimodular} if and only if $M_P$ is a unimodular matrix. A pair $(P,Q)$ of polytopes $P$ and $Q$ is said to be \emph{almost co-unimodular} if up to unimodular transformation, their facet normals as row vectors form a $\{-1,0,1\}$-matrix whose rows with more than two non-zero entries form a totally unimodular matrix, i.e., has determinant $-1,0,1$.

The following lemma tells a trivial fact of facet unimodular polytopes:
\begin{Lem}\label{lem:2}
    Let $M_P$ be an $m\times n$ ($n<m$) representing matrix of facet normals of lattice polytope $P \subset \mathbb{R}^n$. Then $M_P$ is unimodular if and only if there exists a unimodular transformation $L$ with representing matrix $A_L$ such that $M_P\cdot A_L$ is totally unimodular. 
\end{Lem}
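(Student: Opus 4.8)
The plan is to prove the two implications separately, with the forward direction ($\Rightarrow$) carrying all the real content. Throughout I would use that $M_P$ has full column rank $n$, since the facet normals of a full-dimensional polytope span $\mathbb{R}^n$; in particular at least one $n\times n$ submatrix of $M_P$ is nonsingular.

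For the easy direction ($\Leftarrow$), suppose $A_L$ is unimodular and $M_P A_L$ is totally unimodular. I would take an arbitrary $n\times n$ submatrix $M_P[R]$ of $M_P$, indexed by a row set $R$ with $|R|=n$, and invoke multiplicativity of the determinant: $\det(M_P[R]) = \det\bigl((M_P A_L)[R]\bigr)\cdot\det(A_L)^{-1} = \pm\det\bigl((M_P A_L)[R]\bigr)$. Since $(M_P A_L)[R]$ is an $n\times n$ submatrix of the totally unimodular matrix $M_P A_L$, its determinant lies in $\{0,\pm 1\}$, hence so does $\det(M_P[R])$. Thus $M_P$ is unimodular.

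For the forward direction, suppose $M_P$ is unimodular. Using full column rank, I would choose a row set $R_0$ with $|R_0|=n$ for which $B:=M_P[R_0]$ is nonsingular; unimodularity of $M_P$ forces $\det B=\pm 1$, so $B^{-1}$ is again an integral matrix of determinant $\pm 1$, i.e.\ it represents a unimodular transformation. Set $A_L:=B^{-1}$ and $M':=M_P B^{-1}$. Two features of $M'$ are then immediate: the rows indexed by $R_0$ form the identity block $B B^{-1}=I_n$, and every $n\times n$ minor of $M'$ equals $\pm$ the corresponding maximal minor of $M_P$ (again by determinant multiplicativity), hence lies in $\{0,\pm 1\}$.

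The heart of the argument, and the step I expect to be the main obstacle, is to upgrade this control of the maximal ($n\times n$) minors of $M'$ to control of \emph{all} square minors, i.e.\ to deduce total unimodularity; here the identity block does the work. Given an arbitrary $k\times k$ submatrix of $M'$ with row set $S$ and column set $C$ (with $k\le n$), I would pad it to size $n$ by adjoining the identity rows $\{r_j : j\in\bar C\}$ from $R_0$, where $\bar C=\{1,\dots,n\}\setminus C$ indexes the missing columns. If $S$ already meets this set of identity rows, then some row of the $k\times k$ submatrix is a standard basis vector supported off $C$, so its restriction to $C$ vanishes and the minor is $0$. Otherwise $S$ and $\{r_j : j\in\bar C\}$ are disjoint, the adjoined rows form an identity block on columns $\bar C$, and a Laplace expansion along them shows the resulting $n\times n$ minor of $M'$ equals $\pm$ the original $k\times k$ minor. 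In either case the $k\times k$ minor lies in $\{0,\pm 1\}$, so $M'$ is totally unimodular, completing the proof. The delicate points are the disjointness case analysis between $S$ and the padding rows and the sign bookkeeping in the cofactor expansion; both become routine once the identity block is in place, but they are exactly where a careless argument would slip.
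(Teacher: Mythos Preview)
Your proof is correct and follows essentially the same route as the paper: transform $M_P$ so that an $n\times n$ block becomes the identity, then use the identity block to promote unimodularity (control of maximal minors) to total unimodularity (control of all minors). The paper does this more tersely---it outsources the last step to Schrijver's characterization of totally unimodular matrices and omits the easy $(\Leftarrow)$ direction entirely---whereas you supply both the reverse direction and a self-contained Laplace-expansion argument for the padding step; but the underlying idea is the same.
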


\begin{proof}
 Let $F$ be a facet of $P$. We pick a unimodular transformation $L$ such that the vertices of the new facet $L(F)$ are given by standard basis vectors $e_i$ of $\mathbb{R}^n$. Then $M_P\cdot A_L = \begin{pmatrix} I \\ M'_P 
 \end{pmatrix}$, where $I$ is the identity matrix. Since $M_P$ is unimodular, then $M_P\cdot A_L$ is again unimodular. Thus, $M'_P$ is totally unimodular by \cite{Sch}.   
    
\end{proof}

Then, for explaining Corollary \ref{Cor:2}, we quickly see the notion of symmetric edge polytopes.

\begin{Def}
    Given an undirected graph $G$ on the vertex set $\{1,\dots n+1\}$ with the edge set $E$, the \emph{symmetric edge polytope} (\emph{SEP}, for short) associated to $G$ is
    $\Sigma(G) = \mathrm{conv} (\pm(e_i-e_j)\in \mathbb{R}^{n+1} \vert \ ij\in E)$. This polytope is not of full rank since all points lying in the hyperplane $\{(x_1,\dots, x_{n+1}) \in \mathbb{R}^{n+1} \vert x_1+\cdots+x_{n+1}=0\}$. By forgetting the last entry of points, $\Sigma(G)$ will be projected to $\mathbb{R}^n$ as a full dimensional polytope. In the following contents, all the SEPs are of full rank.  
\end{Def}

\section{Main Results}

In this section, we prove our main results, Theorems \ref{thm:3} and \ref{thm:4}. 

\begin{proof}[Proof of Theorem \ref{thm:3}]
   
   For proving $(P,Q)$ an IDP pair, we want to construct a finer fan associated with $P$ than $\Delta_P$. Let $\Gamma$ be a unimodular triangulation of $P$ and let $F$ be any facet of $P$, then $\Gamma_F:=\Gamma \cap F$ provides a unimodular triangualtion of $F$. Denote $\mathcal{F}(P)$ as the collection of all facets of $P$. Thus, $\partial \Gamma :=\bigcup_{F\in \mathcal{F}(P)} \Gamma_F$ is a unimodular triangulation of the boundary of $P$. Consider the centric triangulation $\Gamma(P)$ by taking the convex hulls of simplices in $\partial \Gamma$ and the origin. Then $\Gamma(P)$ is unimodular since $P$ is reflexive and $\partial \Gamma$ is unimodular.% Now, we can take a new fan $\Delta_{\Gamma(P)}$ with respect to $P$ by simply taking cones over the simplices in $\partial \Gamma$. From the construction, $\Delta_{\Gamma(P)}$ is finer than $\Delta_P$ and thus, also refines $\Delta_Q$. By abusing of notations, we rename $\Delta_{\Gamma(P)}$ as $\Delta_P$.

   Let $x\in (P+Q) \cap \mathbb{Z}^n$. We would like to show that $x$ can be written as $x=y+z$, where $y\in P\cap \mathbb{Z}^n$ and $z\in Q\cap \mathbb{Z}^n$. First, we have $x\in 2P$. Assume that $F$ is a facet of $P$ such that $x\in \mathrm{cone}(F)$. With the construction of $\Gamma(P)$ above, we denote $S(F)$ as the union of simplices, i.e., $  S(F) = \bigcup _{\mu \in F \cap \partial \Gamma}\mathrm{conv}(\{0\},\mu)$, where $\mathrm{conv}(\{0\},\mu)$ is unimodular. Thus, $S(F)$ has IDP, and then, $x \in 2S(F) \subset 2P$ can be written as $x=v_1+v_2$ for $v_1,v_2\in S(F) \cap \mathbb{Z}^n$. 
   
   If $v_1 \in Q$ or $v_2 \in Q$, there is nothing to prove. So, we suppose that $v_1,v_2\in S(F) \backslash Q$. Let $y \in P$ and $z \in Q$ be two rational points satisfying $x=y+z$, then $x=y+z=v_1+v_2$.  Be aware that $P$ is set to be reflexive, so the facets have exactly lattice distance $1$ to the origin. In other words, for the facet normal $e_F$ of $F$, we have $\langle e_F, H(F) \rangle=1$, where $H(F)$ is the supporting hyperplane of $F$. By applying $e_F$ on both sides of the equation $y+z=v_1+v_2$, it shows:
\begin{itemize}
\item
$\langle e_F, z \rangle = \langle e_F , v_1+v_2-y\rangle \geq 1$ (because $v_1,v_2$ lie on $F$ and $\langle e_F, y \rangle \leq 1$ since $e_F$ defines a supporting hyperplane).
On the other hand, $z\in Q \subset P$ implies that $\langle e_F, z \rangle \leq 1$. Therefore, $\langle e_F, z \rangle =1$ and $z\in H(F)\cap P = F$. 

\item 
Similarly, by applying $e_F$ on $y$, we have $y \in F$.

\item 
Furthermore, we can let $z$ lie on a fixed cone $\mathrm{cone}(G)$ associated with some face $G$ of $Q$. To be precise, we construct a fan $\Delta$ of cones in the following way:
\begin{itemize}
    \item [1.] if the origin $0$ is contained in the relative interior of $Q$, then the cones over all facets of $Q$ partition the whole space and this is exactly the face fan $\Delta$ w.r.t. $Q$. 
    \item [2.] if $0$ lies on the boundary of $Q$, we may assume that there exists a face $K$ such that the origin $0$ is contained in the relative interior of $K$. Then for any other face $L$ of $Q$, we can still construct a cone $\mathrm{C_L}$ over the face $L$ with origin as the apex. For $K$, we may regard it as a polytope containing the origin in its relative interior. Then we again construct cones $\mathrm{C_{K'}}$ by taking cones over facets $K'$ of $K$ with origin as the apex. From the Definition \ref{Def:2.2}, one can check that the collection of $\mathrm{C_L}$ and $\mathrm{C_{K'}}$ and their intersections form a fan $\Delta$. 
\end{itemize}
Therefore, the set $P:=\bigcup_{C_{max} \in \Delta} C_{max} \cap Q$, where $C_{max}$ is the maximal cone in $\Delta$, is a subdivision of the polytope $Q$. Then for any $z\in Q$, we can indeed find a cone $\mathrm{cone}(G)$ such that $z\in \mathrm{cone}(G)$. 

\end{itemize}

If $z\in G$, then $z\in F \cap G$. Otherwise, $z \in \mathrm{cone}(G) \backslash G$, then $z\in Q \subset P$ is an interior point of $Q$, and thus, also an interior point of $P$. This contradicts to the fact that $z\in F$ is a boundary point of $P$. So, $z\in F \cap G \subset S(F)$. Now, we claim that $F \cap G$ is a face of both $F$ and $G$.

\begin{proof}[Proof of the claim]

Regarding $F\cap G$ as a polytope, let $v \in F\cap G$ be a vertex and $v\notin \mathcal{V}(F)$, then $v \notin \mathcal{V}(G)$ since $v\in \mathcal{V}(G) \subset \mathcal{V}(Q) \subset \mathcal{V}(P) \cup \{0\}$ implies that $v\in \mathcal{V}(P)\cap F = \mathcal{V}(F)$. Thus $v$ is a relative interior point of some face of $G$, namely, there exists two vertices of $G$ such that they lie on two different sides of the hyperplane $H(F)$ across $F$, which contradicts with $F$ being a facet. So, $v \in \mathcal{V}(F)$. Recalling that $P$ is simplicial, then $\mathcal{V}(F\cap G) \subset \mathcal{V}(F)$ implies that $F\cap G$ is a face of $F$. Since there do not exist lattice points lying on both sides of $H(F)$, $H(F)\cap G$ is a face of $G$. Besides, $G \subset Q \subset P$ and $F$ being a facet of $P$ implies that $F \cap \mathcal{V}(G) = H(F) \cap \mathcal{V}(G)$. By taking the convex hull of both sides of the equality, we have $F\cap G = H(F) \cap G$ is a face of $G$.

\end{proof}
 Now, we come back to our discussion about $v_1$ and $v_2$. By our assumption above, $v_1,v_2 \in F \backslash G$. Since $\vert F \cap \mathbb{Z}^n \vert \leq n+1 $, there may be two situations about $v_1,v_2$:

\begin{figure}[h]
    \centering
    \includegraphics[width=0.5\linewidth]{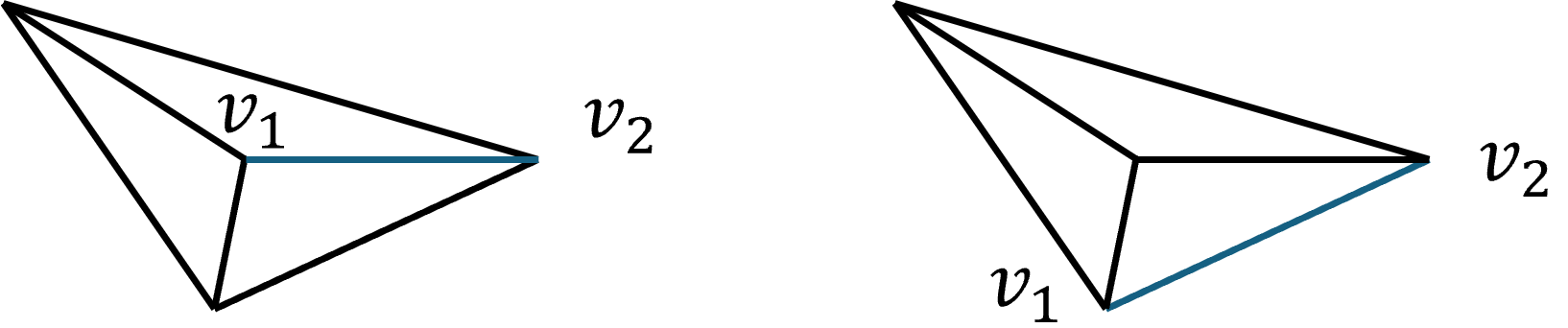}
    \caption{only two possible choices for $v_1$ and $v_2$}
    \label{fig:placeholder}
\end{figure}

\begin{itemize}
    
    \item [1.] If one of $v_1,v_2$ is the lattice point other than vertices of $F$: w.l.o.g, let $v_1$ be that lattice point, then $v_2$ will be the vertex in $F$ which is not on the face $F\cap G$. 
    Since $P$ is simplicial, we may let $e_{F\cap G}$ be a facet normal of the supporting hyperplane $H(F \cap G)$ of $P$ such that $v_2$ is the unique vertex of $F$ not on it. In other words, $v_2$ has the farthest distance to $H(F\cap G)$. Then we have an inequality $\langle e_{F\cap G},H(F \cap G) \rangle \leq a$ for some positive integer $a$. Since $z\in F \cap G$, $y \in F$ and $v_2$ has the farthest distance to the hyperplane $H(F\cap G)$ in $H(F)$, we have 
    $$
    \langle e_{F\cap G}, y+z \rangle = \langle e_{F\cap G},y \rangle + a \geq \langle e_{F\cap G},v_2 \rangle + a.
    $$
    On the other hand, $v_1 \in F$ implies
    $$
    \langle e_{F\cap G}, v_1+v_2 \rangle =\langle e_{F\cap G}, v_2 \rangle +\langle e_{F\cap G}, v_1 \rangle \leq \langle e_{F\cap G}, v_2 \rangle +a .
    $$
    Therefore, the inequalities above should be equalities, namely, $\langle e_{F\cap G},v_1\rangle =a$. Thus, $v_1 \in F\cap G \subset Q$, contradiction.
    
    \item [2.] If $v_1,v_2$ are vertices of $F$: since $P$ is simplicial, the line segment $l(v_1v_2)$ connecting $v_1$ and $v_2$ is a face of $P$. By the previous discussion, $z\in F\cap G \subset G$. If $G \cap l(v_1v_2)$ is not empty, then $v_1$ or $v_2$ lies on $G$ and thus in $Q$, contradiction.  If $z \in l(v_1v_2)$, then $z\in l(v_1v_2)\cap G = \emptyset$, contradiction. Thus, $z\notin l(v_1v_2)$. Since $y+z =v_1+v_2$, by the convexity of polytopes, $y$ lies out of $P$, contradiction.
\end{itemize}

\noindent
Therefore, $x=v_1+v_2$, where at least one of $v_1,v_2$ lies on $Q$.
   
\end{proof}

The following is a direct consequence of the previous theorem. Recall that a smooth Fano polytope is a simplicial reflexive polytope whose vertices on each facet form a lattice basis.

\begin{Cor}\label{Cor:3}
    Let $P,Q$ be two smooth Fano polytopes, where the face fan of $P$ refines the face fan of $Q$. Then $(P,Q)$ is an IDP pair, namely,
    $$ (P+Q) \cap \mathbb{Z}^n = P  \cap \mathbb{Z}^n + Q \cap \mathbb{Z}^n.$$
\end{Cor}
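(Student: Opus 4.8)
The plan is to reduce the statement directly to Theorem~\ref{thm:3} by verifying that a pair of smooth Fano polytopes whose face fans refine meets all of its hypotheses. The only substantive point is translating the refinement condition into the vertex inclusion $\mathcal{V}(Q) \subset \mathcal{V}(P) \cup \{0\}$; everything else is an unwinding of the definition of a smooth Fano polytope.

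First I would dispose of the conditions on $P$. By definition a smooth Fano polytope is simplicial reflexive, so $P$ is an $n$-dimensional simplicial reflexive polytope. Each facet $F$ is the convex hull of $n$ vertices forming a lattice basis of $\mathbb{Z}^n$; subtracting one vertex from the others shows the edge vectors extend to a lattice basis, so $F$ is a unimodular $(n-1)$-simplex and hence $\vert F \cap \mathbb{Z}^n\vert = n \le n+1$. The centric triangulation obtained by coning the facets over the origin (exactly the construction $\Gamma(P)$ appearing in the proof of Theorem~\ref{thm:3}) consists of the simplices $\mathrm{conv}(\{0\}, v_1,\dots,v_n)$, each unimodular because the facet vertices form a lattice basis; this supplies the required unimodular triangulation. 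Since $Q$ is also reflexive, it contains the origin in its interior, so the hypothesis that $Q$ is a lattice polytope containing the origin holds as well.

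The heart of the argument is the inclusion $\mathcal{V}(Q) \subset \mathcal{V}(P)$. Recall from Section~$2$ that the rays of the face fan $\Delta_P$ are generated by the vertices of $P$, and likewise for $\Delta_Q$, and that the vertices of a reflexive polytope are primitive lattice vectors. I would first establish that whenever one complete fan refines another, every ray of the coarser fan is a ray of the finer one: if $u$ lies in the relative interior of a ray $\rho$ of $\Delta_Q$ and in the relative interior of a cone $\sigma \in \Delta_P$ with $\sigma \subseteq \tau \in \Delta_Q$, then $\rho$ is an extreme ray of $\tau$, and a supporting functional of $\tau$ cutting out $\rho$ is nonnegative on $\sigma$ while vanishing at the relative interior point $u$; this forces it to vanish on all of $\sigma$, so $\sigma \subseteq \rho$ and therefore $\sigma = \rho$. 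Applying this to $\Delta_P$ refining $\Delta_Q$, each ray $\mathbb{R}_{\ge 0}\, w$ through a vertex $w$ of $Q$ coincides with a ray $\mathbb{R}_{\ge 0}\, v$ through a vertex $v$ of $P$, and primitivity upgrades this to $w = v$. Hence $\mathcal{V}(Q) \subset \mathcal{V}(P) \subset \mathcal{V}(P) \cup \{0\}$.

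With all hypotheses in place, Theorem~\ref{thm:3} applies and yields that $(P,Q)$ is an IDP pair. I expect the main obstacle to be the rays-of-the-coarser-fan step: one must argue cleanly that a refinement cannot subdivide a one-dimensional cone, and one must invoke the primitivity of the vertices of a reflexive polytope to pass from equality of rays to equality of vertices. Once that is in hand, the corollary is immediate.
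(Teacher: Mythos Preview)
Your proposal is correct and matches the paper's intended approach: the paper gives no proof of Corollary~\ref{Cor:3} at all, merely calling it ``a direct consequence of the previous theorem'' after recalling the definition of a smooth Fano polytope. You are simply supplying the details that the paper leaves implicit, and each of the verifications you outline---that the facets of a smooth Fano polytope are unimodular $(n-1)$-simplices, that the centric cone-over-facets gives a unimodular triangulation, and that face-fan refinement plus primitivity forces $\mathcal{V}(Q)\subset\mathcal{V}(P)$---is sound.
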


Finally, we will give a proof of Theorem \ref{thm:4}. By Lemma \ref{lem:2}, a facet unimodular polytope induces a totally unimodular matrix for its facet normals up to lattice equivalence. With this result, the proof of the first statement can be given by applying ceilings and floors on the decomposition points $y$ and $z$. Then, the second one can be seen as a generalization of the first one. In the following proof, we denote $M_P$ and $M_Q$ as the representing matrices of facet normals of $P$ and $Q$, resepctively. 

\begin{proof}[Proof of Theorem \ref{thm:4}]
    For the first statement:
    Let $x=y+z \in (P+Q) \cap \mathbb{Z}^n$. Denote $e_i$ as the standard basis vector of $\mathbb{R}^n$. For those rows with only one non-zero entry, we always have $\langle e_i, y \rangle =y_i \geq -a_k \ (a_k \in \mathbb{Z}) \implies \lfloor y_i \rfloor \geq -a_k$ and $\lceil y_i \rceil \geq -a_k$ (same for $z_j$). Then it's obvious that $x=y+z=\lfloor y \rfloor + \lceil z \rceil$, where  $\lfloor y \rfloor \in P$ and $\lceil z \rceil \in Q$. For those rows with two non-zero entries, let them be, e.g., $e_i-e_j$. Since by multiplying $-1$ on columns of $M_P$, we can avoid the type $e_i+e_j$ by totally unimodularity of $M_P$ (same for $M_Q$). Then $\langle y, e_i-e_j \rangle \geq -a_k \implies \lfloor y_i \rfloor - \lfloor y_j \rfloor \geq -a_k$ and $\lceil y_i \rceil - \lceil y_j \rceil \geq -a_k$ (same for $z_i$ and $z_j$). So if $e_i-e_j \in M_P$ (or $-e_i+e_j \in M_P$) and $e_i-e_j \in M_Q$ or $-e_i+e_j \in M_Q$, then $x$ can be written as $x = \lfloor y \rfloor + \lceil z \rceil$. If not, then it's the same as the case where only one non-zero entry lies on the row.

    \par
    \vspace{10pt}
    The second statement here is similar but a bit complicated:
    W.l.o.g., we may assume that $M$ is the smallest almost co-unimodular matrix containing $M_P$ and $M_Q$ as submatrices (since we can find a large almost co-unimodular matrix containing $M_P$ and $M_Q$, and reducing it to the smallest one by deleting rows or columns which will keep the totally unimodularity of the submatrices that already have it). There are several cases:
    \begin{itemize}
        \item [(1)] If each row of $M$ contains at most two non-zero entries: Then it degenerates to the same situation of the first statement.
        \item [(2)] If a row of $M$ may contain more than two non-zero entries: Since $M$ is almost co-unimodular, by deleting rows with at most two non-zero entries, then the rest part of the matrix, saying $M'$, is totally unimodular. Recalling the equivalence between (\rmnum{1}) and (\rmnum{4}) in \cite[Theorem 19.3]{Sch}, $M'$ can be transformed into a matrix by multiplying $-1$ on some columns such that the sum of all columns is a vector with entries $0,\pm 1$. Then we first apply the next method (a) in the case where each row of $M$ has at most three non-zero entries and finish the proof by induction.
        
        \item [(a)] Let $m_P, m_Q$ be two rows of $M'$ and can be seen as rows of $M_P$ and $M_Q$, respectively. If they share three non-zero entries in the same position, e.g., $i,j,k$, then the inequality of the hyperplane defined by the facet normal $m_P$ may be written as $y_i-y_j-y_k \geq -1$. In other words, $-y_i-y_j-y_k$ will never appear since the sum of coefficients is not $0, \pm 1$, which is the same for $m_Q$. W.l.o.g., let the defining inequalities of $m_P$ and $m_Q$ be $y_i-y_j-y_k \geq -1$ and $-z_i+z_j+z_k \geq -1$, respectively.
        
        It's trivially true that 
        $$y_i-y_j-y_k \geq -1 \implies \lfloor y_i \rfloor -\lfloor y_j \rfloor- \lfloor y_k \rfloor \geq -1$$ 
        And we have 
        $$y_i-y_j-y_k \geq -1 \implies \lceil y_i \rceil - \lceil y_j \rceil - \lceil y_k \rceil \geq -1$$ by the fact that $-1\leq y_t\leq 1$ for all $t$.  
        On the other hand, 
        $$-z_i+z_j+z_k \geq -1 \implies -\lceil z_i \rceil + \lceil z_j \rceil + \lceil z_k \rceil \geq -1$$ is trivially true and 
        $$-z_i+z_j+z_k \geq -1 \implies -\lfloor z_i \rfloor +\lfloor z_j \rfloor+ \lfloor z_k \rfloor \geq -1$$ by the facts that $-1\leq z_t\leq 1$ for all $t$.
        Therefore, $x=\lfloor y \rfloor + \lceil z \rceil$ since these two integer points are contained in the polytopes.

        \item [(b)] If they do not share three non-zero entries in the same position: Then it's a degenerate situation. For $m_Q$, it may have non-zero entries in different position or have less than three non-zero entries. The latter case is obvious since there is no requirement for the coefficients if a row has at most two non-zero entries by the first statement. Regarding the former one, we have already known that the sum of all entries of $m_Q$ is $\pm 1, 0$. So we can still apply the method in (2)(a). 
        
        \item [(3)] If a row of $M$ may contain at most $n$ non-zero entries: Considering the non-degenerate situation, then the key point is that we can always make sure the difference between the number of positive signs and the number of negative signs is $-1, 0$ or $1$ by totally unimodularity of $M'$. So, by induction on the upper-bound of number of non-zero entries of rows, we finish the proof via obtaining $x=\lfloor y \rfloor + \lceil z \rceil$ from $x=y+z$.
    \end{itemize}

\end{proof}

The corollary below immediately follows from the proof above:

\begin{Cor}
    Let $P,Q$ be two reflexive polytopes, if the facet normals of $P$ and $Q$ form a unimodular matrix, then $(P,Q)$ is an IDP pair.
\end{Cor}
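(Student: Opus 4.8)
The plan is to reduce the corollary to the second statement of Theorem~\ref{thm:4} by producing a single unimodular change of coordinates that turns the combined facet-normal data into a totally unimodular matrix. First I would stack the primitive facet normals of $P$ and of $Q$ as the rows of one integer matrix $M=\begin{pmatrix} M_P \\ M_Q\end{pmatrix}$, which by hypothesis is unimodular, i.e.\ every $n\times n$ minor of $M$ is $0,\pm 1$. Since $P$ and $Q$ are reflexive, they are full dimensional with the origin in the interior, so $M$ has rank $n$ and therefore contains an invertible $n\times n$ submatrix $B$; by unimodularity $\det B=\pm 1$, hence $B^{-1}$ is again an integer matrix of determinant $\pm 1$.

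Next I would invoke Lemma~\ref{lem:2} --- or equivalently its underlying matrix-theoretic input from \cite{Sch} --- applied to the stacked matrix $M$ rather than to a single polytope: right multiplication of $M$ by the unimodular matrix $A_L:=B^{-1}$ normalizes the chosen $n$ rows to an identity block, and since unimodularity is preserved under multiplication by a unimodular matrix, the resulting matrix $M\cdot A_L$ is unimodular with an $I$-block, hence totally unimodular. Geometrically, $A_L$ encodes one unimodular transformation of the ambient space applied simultaneously to $P$ and to $Q$; after this common change of coordinates the facet normals of the (still reflexive) images of $P$ and $Q$ are exactly the rows of the totally unimodular matrix $M\cdot A_L$.

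Because taking dual polytopes and the IDP-pair property are invariant under unimodular transformations, I may assume from the outset that $M$ itself is totally unimodular. A totally unimodular matrix is in particular a $\{-1,0,1\}$-matrix, and every submatrix of it --- in particular the submatrix formed by the rows with more than two non-zero entries --- is again totally unimodular; thus $M$ satisfies the definition of an almost co-unimodular matrix. Consequently $(P,Q)$ is an almost co-unimodular pair of reflexive polytopes, and the second statement of Theorem~\ref{thm:4} yields that $(P,Q)$ is an IDP pair.

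The only genuinely delicate point I expect is the passage from a single polytope to the pair: Lemma~\ref{lem:2} is phrased for the facet-normal matrix of one polytope, whereas here the normalization and the resulting coordinate change must be carried out once on the \emph{stacked} matrix, so that the \emph{same} unimodular transformation serves both $P$ and $Q$. This is exactly what the definition of almost co-unimodular demands, and it is harmless, since the step rests only on the matrix-level fact that a full-column-rank unimodular matrix can be brought to totally unimodular form by right multiplication by a unimodular matrix; the splitting of $M$ into its $M_P$ and $M_Q$ blocks is then inherited automatically. Everything else is definitional once the reduction to the totally unimodular case is in place.
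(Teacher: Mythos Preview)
Your proposal is correct and follows the same route the paper intends: the paper says only that the corollary ``immediately follows from the proof above,'' i.e.\ from Theorem~\ref{thm:4}(2), and what you have written is precisely the missing sentence-level justification---apply the matrix-theoretic content of Lemma~\ref{lem:2} to the stacked matrix $M=\begin{pmatrix}M_P\\ M_Q\end{pmatrix}$ to pass from unimodular to totally unimodular, observe that totally unimodular trivially satisfies the almost co-unimodular definition, and conclude via Theorem~\ref{thm:4}(2). Your remark that the same unimodular transformation must serve both $P$ and $Q$ is the only point requiring care, and you handle it correctly.
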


Symmetric edge polytopes are fruitful objects and nice examples in combinatorics and toric geometry. The IDP pair property of their duals follows from the theorem above by recalling that the dual of any SEP is a facet unimodular polytope.

\begin{Cor}\label{Cor:2}
    Let $P,Q$ be duals of symmetric edge polytopes, then $(P,Q)$ is an IDP pair.
\end{Cor}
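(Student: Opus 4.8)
The plan is to obtain this as a direct consequence of part (1) of Theorem \ref{thm:4}. The strategy is to read off the facet normals of the dual of a symmetric edge polytope and check that they satisfy both hypotheses of that theorem: total unimodularity, and at most two non-zero entries per row. Since both $P$ and $Q$ are of the same type, it suffices to verify these properties for a single dual of an SEP.

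First I would use the face-duality for reflexive polytopes: the symmetric edge polytope $\Sigma(G)$ is reflexive, so the facets of $P=\Sigma(G)^\vee$ are in bijection with the vertices of $\Sigma(G)$, and the primitive facet normal of the facet corresponding to a vertex $v$ is exactly $v$ (indeed $\langle v, u\rangle \ge -1$ is the facet-defining inequality in the polar description). Hence the rows of $M_P$ are precisely the vertices $\pm(e_i-e_j)$, $ij\in E$, of the SEP. Under the projection to $\mathbb{R}^n$ fixed in the definition (forgetting the last coordinate), these become $\pm(e_i-e_j)$ when $i,j\le n$ and $\pm e_i$ when one index is $n+1$; in every case they lie in $\{-1,0,1\}^n$ and carry at most two non-zero entries. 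This secures the second hypothesis of Theorem \ref{thm:4}(1) for free.

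Next I would verify total unimodularity. The matrix whose rows are the vectors $\pm(e_i-e_j)$ is, up to sign and the projection above, the signed vertex--edge incidence matrix of $G$, which is totally unimodular by the classical characterization of incidence/network matrices in \cite{Sch}. Thus $M_P$ is totally unimodular, in particular unimodular, so $P$ is \emph{facet unimodular}; the identical argument applies to $Q$. With both polytopes facet unimodular and every facet normal having at most two non-zero entries, part (1) of Theorem \ref{thm:4} applies verbatim and gives that $(P,Q)$ is an IDP pair.

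The main obstacle is not combinatorial but a matter of bookkeeping: one must make the projection to $\mathbb{R}^n$ compatible with duality, so that the facet normals of the projected dual really are the claimed incidence vectors rather than their images under an inverse-transpose that could spoil the two-non-zero-entry shape. This is exactly the content of the recalled fact that the dual of any SEP is facet unimodular; once the correct lattice identification is fixed, the total unimodularity of the graph incidence matrix and the two-non-zero-entry bound are immediate, and the remainder of the argument is simply an appeal to the already-established Theorem \ref{thm:4}.
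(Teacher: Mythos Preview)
Your proposal is correct and follows essentially the same approach as the paper: the paper's own argument is a one-sentence appeal to Theorem~\ref{thm:4}(1), noting that the dual of any SEP is facet unimodular, while you supply the explicit verification that the facet normals of $\Sigma(G)^\vee$ are the projected vertices $\pm(e_i-e_j)$ and $\pm e_i$, hence have at most two non-zero entries and form a totally unimodular matrix via the network/incidence-matrix characterization. The only part you flag as delicate---compatibility of the projection with duality---is indeed just bookkeeping, since the projection by forgetting the last coordinate restricts to a lattice isomorphism on the hyperplane $\{\sum x_i=0\}$.
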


 The IDP pair property is not true in general for facet unimodular polytopes even though they are relatively nice objects and always possess unimodular triangulations.

\begin{Exa}
    For any two facet unimodular polytopes, they may not be IDP pair. Here is a counterexample in dimension $4$ and we refer to the database PolyDB \cite{PDB} for details of polytopes "F.4D.0114" and "F.4D.0038" below:
\begin{itemize}
    \item Polytope $P$ (F.4D.0114) with $8$ vertices and $6$ facet normals:
\par
Facet normals are $(0, 0, 0, -1)$, $(-1, 0, 0, 0)$, $(1, 1, 0, 1)$, $(0, -1, 0, 0)$, $(0, 0, -1, 0)$, $(0, -1, 1, 0)$
\par

    \item Polytope $Q$ (F.4D.0038) with $18$ vertices and $8$ facet normals:
\par
Facet normals are $(-1, 1, 0, 0)$, $(0, 0, 0, -1)$, $(1, -1, -1, 1)$, $(0, 0, -1, 0)$, $(-1, 0, 0, 0)$, $(0, -1, 0, 0)$, $(0, 1, 1, 0)$, $(1, -1, 0, 1)$.
\par

\end{itemize}

By using Sagemath, one can check that:
$$\vert P\cap \mathbb{Z}^n +Q \cap \mathbb{Z}^n \vert = 1192 \ \ \textnormal{and} \ \ \vert (P+Q)\cap \mathbb{Z}^n \vert = 1236.$$
There are $44$ gap points and one of them in the Minkowski sum $P+Q$ but not in $P\cap \mathbb{Z}^n + Q \cap \mathbb{Z}^n$ is e.g., $(1, 0, -1, -4)$. 
In this example, one can find that in their common matrix, rows like $(1, 1, 0, 1)$, $(1, -1, -1, 1)$ form a non-totally unimodular submatrix. So the whole one must not be totally unimodular. 

\end{Exa}

%At last, we prove Oda's conjecture with the assumption that Ewald's conjecture is correct:

%\begin{The}
%    Let $P$ be a smooth reflexive polytope which satisfies that $P \cap -P$ possesses a unimodular basis of $\mathbb{Z}^n$, then $P$ has IDP.
    
%\end{The}

\section*{Acknowledgement}
I thank Mateusz Michalek and Ramen Sanyal for their suggestions on the Minkowski sums of duals of symmetric edge polytopes in FPSAC 2025. In particular, the essential idea of the proof for the first statement in Theorem \ref{thm:4} was mainly given by Michalek. Besides, I do appreciate Fransisco Sanotos for pointing out the equivalent type of wedge structure. Finally, I also want to thank Akihiro Higashitani for his guidance on this paper.


\begin{thebibliography}{00}

%% For numbered reference style
%% \bibitem{label}
%% Text of bibliographic item
\bibitem{BHH}
M. Beck, C. Haase, A. Higashitani, J. Hofscheier, K. Jochemko, L. Katthän, M. Michalek,
\textit{Smooth centrally symmetric polytopes in dimension 3 are IDP},
Ann. Comb., 23, 255--262,
2019.

\bibitem{CLS}
 D. A. Cox, J. B. Little, H. K. Schenck, 
 \textit{Toric varieties},
American Mathematical Society, Providence, RI, http://dx.doi.org/10.1090/gsm/124,
 2011.
 
\bibitem{DK}
V.I. Danilov, G.A. Koshevoy,
\textit{Discrete convexity and unimodularity I},
Advances in Mathematics, Volume 189, Issue 2, 301-324,
2004.

\bibitem{Ewa}
G. Ewald,
\textit{On the classification of toric fano varieties},
Discrete Comput. Geom. 3, 49-54,
1998.


\bibitem{Fuk}
K. Fukuda, 
\textit{Lecture notes: Polyhedral computation}, 
http://www-oldurls.inf.ethz.ch/personal/fukudak/lect/pclect/notes2015/,
2015.

\bibitem{Ful}
W. Fulton,
\textit{Introduction to toric varieties},
 Annals of Mathematics Studies. Princeton, NJ: Princeton University Press, Vol. 131,
1993

\bibitem{HM}
  C. Haase, I.V. Melnikov, 
  \textit{The Reflexive Dimension of a Lattice Polytope},
  Ann. Comb. 10, 211–217,
  2006.

  
\bibitem{HP}
  C. Haase, A. Paffenholz,
  \textit{On Fanos and chimneys},
  Oberwolfach Rep. no.3, 2303-2306,
  2007.

  
\bibitem{HOT}
  T. Hibi, H. Ohsugi, A. Tsuchiya,
  \textit{Integer Decomposition Property for Cayley Sums of Order and Stable Set Polytopes},
  Michigan Math. J. 69(4), 765-778,
  2020
  
\bibitem{How}
B. Howard,
\textit{Edge unimodular polytopes},
Mini-Workshop: Projective Normality of Smooth Toric Varieties, Oberwolfach, 2291-2293,
2007


\bibitem{Ike}
 A. Ikeda. 
 \textit{Subvarieties of generic hypersurfaces in a nonsingular projective toric variety},
 Math. Z., 263(4):923–937, 
 2009.
 
\bibitem{KS}
  E. D. Kim, F. Santos,
  \textit{An update on the Hirsch conjecture},
  Jahresbericht der Deutschen Mathematiker-Vereinigung, Volume 112(2), 73-98,
  2010.

\bibitem{LP}
B. Lorenz, A. Paffenholz,
\textit{Smooth Reflexive Lattice Polytopes},
Data available at http://polymake.org/polytopes/paffenholz/www/fano.html.


\bibitem{Obro}
M. Øbro,
\textit{Classification of smooth Fano polytopes},
Ph. D. thesis, Aarhus Universitetsforlag (2008), 1-148.


\bibitem{Oda}
  T. Oda,
  \textit{Problems on Minkowski sums of convex lattice polytopes},
  Oberwolfach Conference "Combinatorial Convexity and Algebraic Geometry", 26.10-01.11, 
  1997.

\bibitem{PDB}
A. Paffenholz,
 \textit{polyDB: A Database for Polytopes and Related Objects},
 Database at https://polydb.org/, arXiv:1711.02936, 
 2017

\bibitem{SR}
  S. Robins,
  \textit{Integer Decomposition Property of Polytopes},
  The Electronic Journal of Combinatorics, Volume 31, Issue 1,
  2024.

\bibitem{Sch}
  A. Schrijver,
  \textit{Theory of linear and integer programming},
  Wiley-Interscience Series in Discrete Mathematics. John Wiley \& Sons Ltd., Chichester, 
  1986.

  
\bibitem{Tsu}
  A. Tsuchiya,
  \textit{Cayley sums and Minkowski sums of lattice polytopes},
  SIAM Journal on Discrete Mathematics 37, 1348--1357,
  2023.









\end{thebibliography}
\end{document}